\title{Geometric Koszul complexes, syzygies of K3 surfaces and the Tango bundle}
\date{}
\author{\scshape J\"urgen Rathmann}
\setlist{nolistsep}
\newtheorem{theorem}{Theorem}[section]
\newtheorem*{ThA}{Voisin's Theorem}
\newtheorem{proposition}[theorem]{Proposition}
\newtheorem{lemma}[theorem]{Lemma}
\newtheorem{lemma1}{Lemma}[theorem]
\theoremstyle{remark}
\newtheorem{num}[theorem]{}
\newtheorem{remark}[theorem]{Remark}
\newtheorem{definition}[theorem]{Definition}
\newcommand\cE{\mathscr{E}}
\newcommand\cF{\mathscr{F}}
\newcommand\cG{\mathscr{G}}
\newcommand\cI{\mathscr{I}}
\newcommand\cK{\mathscr{K}}
\newcommand\cO{\mathscr{O}}
\newcommand\bP{\mathbf{P}}
\newcommand\tDelta{{\tilde{I}}}
\newcommand\sigmax{\sigma}
\DeclareMathOperator{\Sym}{Sym}
\DeclareMathOperator{\Ker}{Ker}
\DeclareMathOperator{\Img}{Im}
\DeclareMathOperator{\Grass}{Grass}
\DeclareMathOperator{\rk}{rk}
\DeclareMathOperator{\Hom}{Hom}
\DeclareMathOperator{\Hilb}{Hilb}
\DeclareMathOperator{\Bl}{Bl}
\def\hide#1{\relax}
\begin{document}
\bibliographystyle{plain}
\maketitle

{\small 
A key result for syzygies of curves is Voisin's proof of Green's conjecture 
for the canonical embedding of a general curve of any genus. Her primary tools were the 
Lazarsfeld Mukai bundle on a K3 surface and a representation of Koszul 
cohomology on the Hilbert scheme of points on the surface. 
In this note we construct representations of the Koszul complex 
on Grassmann varieties; Voisin's setup arises as the inverse image of
one of the maps.
Using a different map, we give a substantially shorter proof of Voisin's result 
for K3 surfaces of even sectional genus. 
}

\section{Introduction}

Let $X\subset\bP H^0(L)=\bP(V)=\bP^r$ be a projective variety, embedded by a very ample 
line bundle $L$.

We denote by $S=\Sym H^0(X,L)$ the homogeneous coordinate ring of $\bP^r$, by
$R=R(L)=\oplus_mH^0(X,mL)$
the graded $S$-module associated to $L$, and by $E_\bullet=E_\bullet(L)$ the minimal graded free resolution 
of $R$ over $S$:
\begin{equation*}
0\to E_{r-1}\to\dotsc\to E_2\to E_1\to E_0\to R\to 0.
\end{equation*}
Let $K_{p,q}(X;L)$ be the vector space of minimal generators of $E_p$ in degree $p+q$, so that
\begin{equation*}
E_p=\oplus_qK_{p,q}(X;L)\otimes S(-p-q).
\end{equation*}

A general theme in the study of syzygies is to relate them to special secant 
configurations on $X$. This suggests looking for ways to identify syzygies on 
Grassmann varieties as parameter spaces for linear subvarieties.

In this note we offer such an approach: For any given positive integer $k\le r$, we construct a complex 
of sheaves $\cK_\bullet^k$ on $\bP^r\times\Grass_k(V)$ with the property that for any coherent sheaf $\cF$
on $\bP^r$, the complex of sections of $pr_1^*\cF\otimes\cK_\bullet^k$  computes
the Koszul cohomology of $\cF$.

These complexes arise by splicing two subcomplexes:
\begin{enumerate}
\item a twist if the resolution of the ideal sheaf of the incidence variety in $\bP^r\times\Grass_k(V)$,
\item followed by the relative Koszul complex for the projection from the incidence variety 
to the Grassmannian.
\end{enumerate}

The direct image of this complex on $\bP^r$ recovers the Koszul complex,
while the projection to $\Grass_k(V)$ can use information on the geometry of $\cF$ along the 
$k$-planes in $\bP^r$.

Voisin \cite{V1} has constructed a representation of $K_{p,q}(X;L)$ on a certain 
open subscheme of the Hilbert scheme $\Hilb^p(X)$ of points on $X$. This 
representation is closely related to our construction, namely, it arises as
an inverse image under the canonical rational map $\Hilb^p(X)\to\Grass_p(V)$ which sends a subscheme of $p$
points to its linear span.

This approach has also been used by Ein and Lazarsfeld in their proof of the gonality conjecture \cite{EL}.

A famous application is Voisin's proof of Green's conjecture on the syzygies of the
canonical embedding of a general curve $C$ of even genus $2k$. Via taking a hyperplane section, 
it is based on the following result.

\begin{ThA}[\cite{V1}]
Let $(X,L)$ be a \textup{K3} surface whose Picard group is generated by a very ample
line bundle $L$ with $L\cdot L=4k-2$. Then we have $K_{k,1}(X;L)=0$.
\end{ThA}

Voisin (and later also Kemeny in his short proof \cite{K}) work with the space of sections of a Lazarsfeld Mukai
bundle, a certain rank 2 vector bundle on $X$ which is unique under the given 
conditions.\footnote{A completely different proof of Green's conjecture for the canonical embedding
of a general curve of any genus has recently been given by Aprodu, Farkas, Papadima, Raicu and Weyman \cite{AFP}.}

Voisin employs the inverse image of the representation of Koszul cohomology on 
the Hilbert scheme using an elaborate calculation.

Our application of the geometric Koszul complexes will also use a
Lazarsfeld Mukai bundle $E$ on $X$, but our approach differs in two points from Voisin's:
\begin{enumerate}
\item
We show the vanishing of the Koszul cohomology group $K_{k-2,2}(X;L)$ which is dual to 
$K_{k,1}(X;L)$. This is essentially a matter of taste, but perhaps a more natural choice
(compare Lazarsfeld's remark at the end of the introduction in \cite{EL}). This approach allows us
to simultaneously investigate the cases of K3 surfaces of even and odd sectional genus. 
\item
The zero sets of sections of $E$ consist of sets of $k+1$ points (counted with multiplicities) 
whose linear span has only dimension $k-1$. This means that the rational map from
$\Hilb^{k+1}(X)$ to $\Grass_{k+1}(V)$ is not defined on the images of the sections of $E$.
Voisin solves this problem by moving the image inside the Hilbert scheme.
In our approach, we work with the inverse image from a different Grassmann
variety, $\Grass_k(V)$, instead.
\end{enumerate}

Turning to a description of our proof, the starting point is the Koszul complex of sheaves on the 
K3 surface $X$ whose global sections compute Koszul cohomology. 

We employ three transformation steps along the following diagram from right to left:
\begin{equation*}
\begin{tikzcd}
X\times\bP(H^0(E)^\vee) \ar[r] \ar[d] & X\times\Grass_k(H^0(L)) \ar[d] \\
\bP(H^0(E)^\vee) & X
\end{tikzcd}
\end{equation*}
\begin{enumerate}
\item The Koszul complex on $X$ can be represented as the direct image of a complex on the product of $X$ with
any (fixed) Grassmannian. We construct these complexes in section \ref{sec2}, and 
establish their properties. As the usual Koszul complex arises as a direct image, sections (and
cohomology groups of the complex of sections) are the same.
\item The K3 surface $X$ carries a unique (for even sectional genus) Lazarsfeld Mukai bundle $E$, 
and its space of sections maps to the
Grassmannian. The relevant part of the geometric Koszul complex is supported on the incidence 
variety in $X\times\Grass_k(V)$ and consists of locally free sheaves. 
Hence it is locally split and the inverse image on $X\times\bP(H^0(E)^\vee)$ remains exact.
\item Finally, we project the complex to the parameter space for the sections.
As a direct image, the projection map does not change the sections. We find that the inverse images of the sections from 
$X\times\Grass_k(V)$ are actually supported on a subcomplex of the direct image complex (see \ref{lem42}).
The Koszul cohomology group is identified as the first cohomology of a line bundle on projective space, 
and hence vanishes.
\end{enumerate}

Our proof crucially depends on properties of the inverse image of the universal quotient bundle on
the Grassmannian. Its pullback to $\bP=\bP(H^0(E)^\vee)$ turns out to be well known in a different context. 
It was first constructed by
Tango \cite{T} as an example of a rank $k$ vector bundle on $\bP^{k+1}$.

Its key property is its minimal graded resolution which has the particularly simple form (compare \cite[3.47]{V2})
\begin{equation*}
0\to \cO(-2)\to H^0(E)\otimes\cO(-1)\to H^0(L)\otimes\cO\to Q'\to 0.
\end{equation*}

As Voisin's theorem does not hold unconditionally over base fields of finite characteristic, 
our proof needs to use appropriate tools. The dependence on the characteristic manifests itself
in two points: 
\begin{enumerate}
\item[(i)]
The resolutions for $\wedge^i Q'$ $(i\le k)$ in Theorem \ref{thm44} require that the characteristic of 
the base field exceeds $i$, hence the characteristic must be $0$ or larger than $k$.
\item[(ii)]
As mentioned above, the transformed sections
live in a subcomplex of the transformed Koszul complex on $\bP$. This subcomplex will be exact
if and only if the (degree $k+1$) trace map from the incidence variety $I_E\subset X\times\bP\to\bP$ is an isomorphism.
This holds only if the characteristic of the base field does 
not divide $k+1$ (see Lemma \ref{lem432}).
\end{enumerate}

Our proof does not extend to K3 surfaces with odd sectional genus, because the inverse image homomorphism is no longer
injective on sections. For details and further discussion, see (\ref{rem46}) and (\ref{rem47}).

Finally, we would like to point out that the application to the space of sections of the 
Lazarsfeld-Mukai bundle may appear counter-intuitive: The geometric representations 
of Koszul cohomology are geared to detect secant spaces to a variety, or to encode the 
absence of such secant spaces (e.g. used in the proof of the gonality conjecture \cite{EL}). 
In our case, all the sections correspond to secant spaces.

I am grateful to Michael Kemeny and Rob Lazarsfeld for correspondence.
The material in section 2 is considerably older than the application to K3 surfaces. I have benefited from
discussions with David Eisenbud, Rob Lazarsfeld and Ruijie Yang.

\section{Syzygies via the Grassmannian}\label{sec2}

This section constructs the geometric Koszul complexes and establishes their key properties. 
Our proof in section \ref{sec4} only
uses the fact that the short exact sequence in (\ref{sect29}.1) calculates Koszul cohomology. 
In comparison, Voisin \cite{V1} uses the sequence in (\ref{sect29}.2), whereas Kemeny's proof 
\cite{K} takes place in the setting of (\ref{sect29}.3), but without using the map to the Grassmannian.

\begin{num}
In this section, $X\subset\bP H^0(X,L)=\bP^r$  is a smooth projective manifold embedded by the 
sections of a very ample line bundle $L$,
$\cF$ is a coherent sheaf on $X$, not necessarily locally free.

The shape of the minimal graded resolution of the module $\oplus_l H^0(X,\cF(l))$ over the graded ring
$S=\Sym H^0(X,L)$ depends on the Koszul cohomology groups $K_{p,q}(X,\cF;L)$.

These can be calculated from the standard Koszul complex of sheaves 
\begin{equation*}
\cK_\bullet=\dotsc\to \wedge^2V\otimes\cO(-2)\to V\otimes\cO(-1) \to\cO\to 0
\end{equation*}
on $\bP^r$ by twisting
with $\cF(p+q)$, and taking the $p$-th homology group of the complex of global sections:
i.e., 
\begin{equation*}
K_{p,q}(X,\cF;L)=H_p\Big(\Gamma\big(X,\cK_\bullet\otimes\cF(p+q)\big)\Big).
\end{equation*} 
\end{num}

\begin{num}
Given two sheaves $\cF$, $\cG$ on a projective manifold $X$, the multiplication map of sections
\begin{equation*}
H^0(\cF)\otimes H^0(\cG)\to H^0(\cF\otimes\cG)
\end{equation*}
can be studied with geometric tools on $X\times X$ via the map of sheaves
\begin{equation*}
pr_1^*\cF\otimes pr_2^*\cG \to (pr_1^*\cF\otimes pr_2^*\cG)\otimes\cO_\Delta
\end{equation*}
where $\Delta\subset X\times X$ is the diagonal.

The groups $\wedge^i H^0(X,L)$ occurring in the Koszul complex are naturally represented
as the sections of the tautological positive line bundle on the Grassmannian $\Grass_i(V)$. This suggests
to look for a representation of the Koszul map on $X\times\Grass_i(V)$, where the incidence variety $I_{i,r}$
takes the role of the diagonal, namely as
\begin{equation*}
pr_1^*\cF\otimes pr_2^*\cO_{\Grass}(1) \to \big(pr_1^*\cF\otimes pr_2^*\cO_{\Grass}(1)\big)\otimes\cO_{I_{i,r}}.
\end{equation*}
It is surprising that not only this works, but that the whole Koszul complex can be represented.
\end{num}

\begin{num}
Let $\Grass_i(V)$ be the Grassmann variety of $i$-dimensional quotients of a
vector space $V$ of dimension $r+1$ (corresponding to linear subspaces of $\bP^r$ of dimension $i-1$),
with universal subbundle $S$ and universal quotient bundle $Q$.

The incidence variety $I_{i,r}=\{(x,L)\,\vert\,x\in H\}\subset\bP^r\times\Grass_i(V)$ is the 
vanishing scheme of the composition $pr_2^*S\to V\otimes\cO\to pr_1^*\cO(1)$,
hence there is an exact sequence
\begin{equation}\label{eq1}
0\to \wedge^{r+1-i} \tilde S\to \dotsc\to \tilde S\to \cO\to\cO_I\to 0
\end{equation}
where $\tilde S=pr_1^*\cO(-1)\times pr_2^*S$.

Denoting by $\pi_1,\pi_2$ the restrictions of the two projections to $I_{i,r}$, the map $\pi_2^*S\to\pi_1^*\cO(1)$
vanishes, thus there is a surjection $\pi_2^*Q\to\pi_1^*\cO(1)$ on $I_{i,r}$ and an exact sequence
\begin{equation}\label{eq2}
0\to \wedge^i \tilde Q\to\dotsc \to  \wedge^2\tilde Q\to \tilde Q\to \cO_I\to 0
\end{equation}
where $\tilde Q=\pi_1^*\cO(-1)\otimes\pi_2^* Q$. 

The morphism $\pi_2$ expresses $I_{i,r}$ as a relative $\bP^i$-bundle over $\Grass_i(V)$; 
and the complex (\ref{eq2}) is the corresponding relative Koszul complex.
\end{num}

\begin{definition}
The \emph{geometric Koszul complex $\cK^i_\bullet$} on $\bP^r\times\Grass_i(V)$ is the join of
the following two complexes
\begin{enumerate}
\item[(i)] the twist of (\ref{eq1}) with $\det(\tilde S)^{-1}=pr_1^*\cO(-i)\otimes pr_2^*\det(S)^{-1}$, and
\item[(ii)] the complex (\ref{eq2}).
\end{enumerate}
\end{definition}

The reader will note that the complex (i) ends with $\det(\tilde S)^{-1}\otimes\cO_I=\pi_1^*\cO(-i)\otimes \pi_2^* \det(Q)$,
and this is the first term of the complex (ii).

Key properties of the complexes $\cK^i_\bullet$ are as follows: 

\begin{theorem}\label{thm25}
Let $\cK^i_\bullet$ be the geometric Koszul complex on $\bP^r\times\Grass_i(V)$.
\begin{enumerate}
\item[\textup{1.}] $pr_{1,*}\cK^i_\bullet$ is isomorphic to the Koszul complex $\cK_\bullet$ on $\bP^r$.
\item[\textup{2.}] Let $\cF$ be a coherent sheaf on $\bP^r$. Then we have
\begin{equation*}
pr_{1,*}(pr_1^*\cF\otimes \cK^i_\bullet)\cong \cF\otimes  pr_{1,*}\cK^i_\bullet= \cF\otimes\cK_\bullet.
\end{equation*}
\end{enumerate}
\end{theorem}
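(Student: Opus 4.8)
The plan is to compute the direct image $pr_{1,*}$ term by term, using that $\cK^i_\bullet$ is a complex of locally free sheaves of the form $pr_1^*\cO(a)\otimes pr_2^*\cG$ for various bundles $\cG$ on the Grassmannian, so that the projection formula applies and the computation reduces to knowing $H^0(\Grass_i(V),\cG)$ and the vanishing of higher direct images $R^j pr_{1,*}$. First I would record the two families of terms separately. For the twisted complex (i), the terms are $\wedge^j\tilde S\otimes\det(\tilde S)^{-1}=pr_1^*\cO(j-i)\otimes pr_2^*(\wedge^j S\otimes\det S^{-1})$; since $pr_1$ is the projection with fibre $\Grass_i(V)$, the projection formula gives $pr_{1,*}$ of this term as $\cO(j-i)\otimes H^0(\Grass_i(V),\wedge^j S\otimes\det S^{-1})$, and similarly $R^{>0}pr_{1,*}$ involves the higher cohomology of the same bundle on the Grassmannian. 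For the relative Koszul complex (ii), the terms $\wedge^j\tilde Q=\pi_1^*\cO(-j)\otimes\pi_2^*\wedge^j Q$ live on $I_{i,r}$, so one first pushes forward along $\pi_2\colon I_{i,r}\to\Grass_i(V)$ (a $\bP^i$-bundle) and then along $pr_1$; alternatively push directly to $\bP^r$ along $\pi_1$. I would use the Borel--Weil--Bott type computation $H^\bullet(\Grass_i(V),\wedge^j S\otimes\det S^{-1})$: the bundle $\wedge^j S\otimes\det S^{-1}$ corresponds to a weight that is either singular (all cohomology vanishes) or regular with a single nonzero cohomology group equal to $\wedge^j V$ concentrated in the expected degree; the upshot is that after twisting, term $j$ of complex (i) contributes exactly $\wedge^j V\otimes\cO(j-i)$ to $pr_{1,*}$ and nothing to higher direct images, which matches the tail of the ordinary Koszul complex $\cK_\bullet$ (reindexed by the splice).

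Next I would treat complex (ii): the relative Koszul complex for the $\bP^i$-bundle $\pi_2\colon I_{i,r}\to\Grass_i(V)$ resolves $\cO_I$ by the bundles $\wedge^j\tilde Q=\pi_1^*\cO(-j)\otimes\pi_2^*\wedge^j Q$. Pushing forward along $\pi_2$ is the standard fibrewise cohomology of $\cO_{\bP^i}(-j)$ on the fibres twisted by bundles from the base, which vanishes for $0<j\le i$ except in the top term; combined with the pushforward of $\cO_I$ along $\pi_1$ (which is $\cO_{\bP^r}$, since $I_{i,r}\to\bP^r$ is a $\Grass_{i-1}(V/\!\langle x\rangle)$-bundle with connected fibres and $\pi_{1,*}\cO_I=\cO_{\bP^r}$, $R^{>0}\pi_{1,*}\cO_I=0$ in the relevant range), one reads off that $pr_{1,*}$ of complex (ii) is $\wedge^\bullet V\otimes\cO(-\bullet)$ in degrees $0$ through $i$, i.e. the initial segment of $\cK_\bullet$. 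The splice point: the last term of (i), $\det(\tilde S)^{-1}\otimes\cO_I=\pi_1^*\cO(-i)\otimes\pi_2^*\det Q$, and the first term of (ii), $\wedge^i\tilde Q=\pi_1^*\cO(-i)\otimes\pi_2^*\wedge^i Q=\pi_1^*\cO(-i)\otimes\pi_2^*\det Q$, agree, and I would check that the differential between them becomes, after $pr_{1,*}$, the Koszul differential $\wedge^i V\otimes\cO(-i)\to\wedge^{i-1}V\otimes\cO(-(i-1))$ — this is the compatibility of the two resolutions at the splice, and it is where one has to be slightly careful about signs and the identification $\det S^{-1}\cong\wedge^{r+1-i}Q\otimes\det V^{-1}$ (hence also about a twist by $\det V$, which is canonically trivial after the dust settles since $\wedge^{r+1}V$ is one-dimensional). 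Assembling the term-by-term identifications with the differentials gives an isomorphism of complexes $pr_{1,*}\cK^i_\bullet\cong\cK_\bullet$, which is part 1.

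For part 2, I would deduce it formally from part 1: since each term of $\cK^i_\bullet$ is of the form $pr_1^*\cO(a)\otimes pr_2^*\cG$ with $\cG$ locally free on $\Grass_i(V)$, the projection formula gives $pr_{1,*}(pr_1^*\cF\otimes\cK^i_\bullet)\cong\cF\otimes pr_{1,*}\cK^i_\bullet$ for an arbitrary coherent $\cF$ on $\bP^r$ — one does not need $\cF$ locally free because the flat factor $pr_2^*\cG$ carries the projection-formula argument, and the higher-direct-image vanishing established in part 1 persists after tensoring with $pr_1^*\cF$ (the relevant $R^{j}pr_{1,*}$ are computed on the fibres, which are Grassmannians/projective bundles, and vanish independently of $\cF$). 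Then invoke part 1 to replace $pr_{1,*}\cK^i_\bullet$ by $\cK_\bullet$. The main obstacle I anticipate is not any single hard estimate but the bookkeeping at the splice: making the Bott-type vanishing statements precise for the bundles $\wedge^j S\otimes\det S^{-1}$ across the full range $0\le j\le r+1-i$, and verifying that the one differential connecting the two spliced subcomplexes induces the correct Koszul differential (with the right sign) after pushforward. Everything else is the projection formula plus standard cohomology of Grassmannians and projective bundles.
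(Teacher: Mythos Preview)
Your approach to part 1 matches the paper's: compute $R^l pr_{1,*}(\cK^i_j)$ term by term via Bott, deduce that the direct image has the right terms and is exact, then check the differentials (the paper also flags this last step as bookkeeping and in fact omits it).

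For part 2 you have a real slip. You assert that ``each term of $\cK^i_\bullet$ is of the form $pr_1^*\cO(a)\otimes pr_2^*\cG$ with $\cG$ locally free on $\Grass_i(V)$,'' and then invoke K\"unneth. But you yourself noted earlier that the terms coming from complex (ii), namely $\wedge^j\tilde Q$, are sheaves on the incidence variety $I_{i,r}$, not on the product; as sheaves on $\bP^r\times\Grass_i(V)$ they are pushforwards from $I_{i,r}$ and are \emph{not} external tensor products. For these terms the relevant map is $\pi_1\colon I_{i,r}\to\bP^r$, and the projection formula $\pi_{1,*}(\pi_1^*\cF\otimes\cE)\cong\cF\otimes\pi_{1,*}\cE$ is only automatic for locally free $\cF$. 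Your heuristic that ``$R^j pr_{1,*}$ are computed on the fibres and vanish independently of $\cF$'' is on the right track, but it is not a proof without further argument.

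The paper closes this gap as follows: construct the natural comparison map via adjunction, resolve $\cF$ by locally free sheaves $\cF_\bullet\to\cF$, apply the standard projection formula for each $\cF_l$, and then use the flatness of $\pi_1$ (resp.\ $pr_1$) together with the vanishing $R^{>0}f_*\cE=0$ from part 1 to show, via the hypercohomology spectral sequence, that the bottom row of the comparison diagram stays right-exact after applying $f_*$. Alternatively one can cite the derived projection formula $Rf_*(\cE\otimes^L Lf^*\cF)\simeq Rf_*\cE\otimes^L\cF$ and use that $Rf_*\cE$ is a locally free sheaf in degree $0$, but either way something must be said beyond K\"unneth.
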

Therefore $\cK^i_\bullet$ can be used to calculate the Koszul cohomology groups of $\cF$.
\begin{proof}
1. Regarding the sheaves in the complexes, Bott's formula \cite[4.1.12 and 4.1.8]{W} shows 
(after taking some effort to identify the sheaves involved) that
\begin{equation*}
R^lpr_{1,*}\big(\cK^i_j\big)=
\begin{cases}
\cK_j  & \text{ for } l=0 \\
0 & \text{ for } l>0,
\end{cases}
\end{equation*}
hence the spectral sequence for the hyperdirect image of $pr_1$ implies that $pr_{1,*}(\cK^i_\bullet)$ 
has the same terms as $\cK_\bullet$ and is exact.

It remains to be shown that the maps of the two complexes correspond to each other,
which requires tracking the maps during the proof of Bott's formula.
This is straightforward when working on an appropriate flag variety, 
but not very illuminating for the reader. We omit the details.

2.
Given an arbitrary map $f\colon X\to Y$ and a coherent $\cO_X$-module $\cE$, the adjointness of $f_*$ and $f^*$ 
provides a natural map $f^*f_*\cE\to\cE$. For any $\cO_Y$-module $\cF$, we have natural bijections
\begin{multline*}
\Hom\big(f^*f_*\cE\otimes f^*\cF,\cE\otimes f^*\cF\big) \to\Hom\big(f^*(f_*\cE\otimes\cF),\cE\otimes f^*\cF\big) \\
\to \Hom\big(f_*\cE\otimes\cF,f_*(\cE\otimes f^*\cF)\big).
\end{multline*}
Corresponding to the adjunction map $f^*f_*\cE\to\cE$, the induced map
\begin{equation*}
f^*f_*\cE\otimes f^*\cF\to\cE\otimes f^*\cF
\end{equation*}
leads to a natural map
\begin{equation*}
f_*\cE\otimes\cF\to f_*(\cE\otimes f^*\cF).
\end{equation*}
If $\cF$ is locally free, this map is well known to be an isomorphism (\emph{projection formula}).

In our situation, we obtain a natural map of complexes
\begin{equation*}
\cK_\bullet\otimes\cF\cong pr_{1,*}\big(\cK^i_\bullet\big)\otimes\cF\to pr_{1,*}\big(\cK^i_\bullet\otimes pr_1^*\cF\big),
\end{equation*}
hence we only need to show that the maps of sheaves
\begin{equation*}
pr_{1,*}(\cK^i_j)\otimes\cF\to pr_{1,*}\big(\cK^i_j\otimes pr_1^*\cF\big)
\end{equation*}
are isomorphisms for every $j$.

Starting from a resolution 
\begin{equation*}
\dotsc\to\cF_2\to\cF_1\to\cF_0\to\cF\to 0
\end{equation*}
of $\cF$ on $\bP^r$ by locally free sheaves, there is a commutative diagram
\begin{equation*}
\begin{tikzcd}[column sep=small]
f_*\cE\otimes\cF_1 \ar[r]\ar[d] & f_*\cE\otimes\cF_0 \ar[r]\ar[d] & f_*\cE\otimes\cF \ar[r]\ar[d] & 0 \\
f_*(\cE\otimes f^*\cF_1) \ar[r] &f_*(\cE\otimes f_*\cF_0) \ar[r] &f_*(\cE\otimes f^*\cF) \ar[r] & 0
\end{tikzcd}
\end{equation*}
where $\cE$ is one of the $\cK^i_j$ and $f$ is either 
$pr_1\colon \bP^r\times\Grass_i(V)\to\bP^r$ or $\pi_1\colon I_{i,r}\to\bP^r$,
depending on whether $j>i+1$ or $j\le i+1$.

The two vertical maps on the left and in the middle are isomorphisms by the projection
formula for locally free sheaves, hence the vertical map on the right 
will also be bijective, if the two rows are exact.

The top row is certainly exact, since $f_*\cE$ is locally free, hence flat.

Regarding the bottom row, note that $pr_1$ (resp.\ $\pi_1$) is flat and $\cE$ 
is locally free on $\bP^r\times\Grass_i(V)$ (resp.\ $I_{i,r}$), hence the complex 
\begin{equation*}
\dotsc\to \cE\otimes f^*\cF_2\to \cE\otimes f^*\cF_1\to \cE\otimes f^*\cF_0\to \cE\otimes f^*\cF\to 0
\end{equation*}
remains exact.

Applying $f_*$, we obtain the complex
\begin{equation*}
\dotsc\to f_*(\cE\otimes f^*\cF_2)\to f_*(\cE\otimes f^*\cF_1)\to f_*(\cE\otimes f^*\cF_0)\to f_*(\cE\otimes f^*\cF)\to 0
\end{equation*}
whose exactness can be investigated using the spectral sequence
for the hyperdirect image of $f$.
Exactness at $f_*(\cE\otimes f^*\cF)$ requires the vanishing of 
$R^lf_*(\cE\otimes f^*\cF_l)$ for $l\ge 1$, while exactness at $f_*(\cE\otimes f^*\cF_0)$ requires 
the vanishing of $R^lf_*(\cE\otimes f^*\cF_{l+1})$ for $l\ge 1$.

Now the projection formula for locally free $\cF_j$ shows that
\begin{equation*}
R^lf_*(\cE\otimes f^*\cF_j)\cong R^lf_*(\cE)\otimes \cF_j,
\end{equation*}
which reduces the question to the well-known vanishing of $R^lf_*(\cE)$ for $l\ge 1$ (Bott's formula).
\end{proof}

\begin{remark}\label{num26}
Our proof shows that for $k\le i-1$ the restriction of sections
\begin{multline*}
H^0(\bP^r\times\Grass_i(V),pr_1^*\cF\otimes pr_2^*\wedge^kQ)=
H^0(\bP^r,\cF)\otimes H^0(\Grass_i(V),\wedge^kQ) \\
\to H^0(I_{i,r},\pi_1^*\cF\otimes\pi_2^*\wedge^k Q)
\end{multline*}
is bijective for every coherent sheaf $\cF$ on $\bP^r$. This will be used in the proof of
(\ref{lem42}) below.
\end{remark}

\begin{num}
For any morphism $f\colon Y\to X\times\Grass_i(V)$, there is an inverse image homomorphism
\begin{equation*}
K_{p,q}(X,\cF;L)\to H_p\Big(\Gamma\big(Y,f^*(\cK^i_\bullet\otimes pr_1^*\cF(p+q))\big)\Big).
\end{equation*}

This map is derived from standard constructions on sheaves, as follows:
Given a morphism $f\colon X_1\to X_2$, the adjointness
of $f^*$ and $f_*$ defines a natural
transformation $\text{Id}\to f_*f^*$. As $\Gamma(X_2,f_*\cG)=\Gamma(X_1,\cG)$
for any sheaf $\cG$ on $X_1$,
one obtains a natural map of global sections
\begin{equation*}
\Gamma(X_2,\cG')\to\Gamma(X_2,f_*f^*\cG')=\Gamma(X_1,f^*\cG')
\end{equation*}
for any quasi-coherent sheaf $\cG'$ on $X_2$. This construction extends to complexes of sheaves,
and to their homology.
\end{num}

\begin{num}
Base change can in particular be applied to spaces of the form $X\times Z$, 
where $Z$ is a parameter space of points on $X$, and where the rational map
to the Grassmannian maps a set of points to its linear span:
\begin{enumerate}
\item[(i)] the $i$-th symmetric product $X^{(i)}$ of $X$,
\item[(ii)] the $i$-th cartesian product $X^i$ of $X$ \cite{ELY},
\item[(iii)] the principal component $\Hilb_{princ}^i(X)$ of the Hilbert scheme of $0$-dimensional 
subschemes of $X$ of fixed length $i$ \cite{V1}.
\end{enumerate}
We have the following rational maps (dashed arrows) and morphisms (solid arrows) which are
compatible with the corresponding inverse images:
\begin{equation*}
\begin{tikzcd}
X^i \ar[r, dashed] \ar[dr] & \Hilb^i_{princ}(X) \ar[r, dashed] \ar[d] & \Grass_i(V) \\
& X^{(i)} \ar[ru,dashed]
\end{tikzcd}
\end{equation*}
The two maps to $\Grass_i(V)$ send a set of $i$ points to its span, and the vertical map in the
middle is the (birational) Hilbert-Chow morphism.

If the embedding $X\to\bP^r$ is $i$-very ample (for the definition, see e.g.~\cite{EL}), then the map 
$\Hilb^i_{princ}(X)\to\Grass_i(V)$ is defined everywhere. This indicates that the principal
component of the Hilbert scheme is the canonical choice for base change.

A key criterion is to determine to what degree the inverse image map of the Koszul complex
is bijective on sections. 
\end{num}

\begin{num}
In general there are two challenges during the implementation:
\begin{enumerate}
\item
The morphism to the Grassmannian is only defined on points of the parameter space
which correspond to points of $X$ in general position, hence may exist only as a rational map. 
This can be addressed by restricting the map to the subscheme of points in general position, or 
by blowing up the locus of indeterminacy.\footnote{On the principal component of the Hilbert
scheme of points, the second part of the complex $\cK^i_\bullet$, represented by
the sequence (\ref{eq2}), can always be reconstructed, using the surjection 
$E_L=\pi_2^*\pi_{2,*}\pi_1^*\to\pi_1^*L$,
even when the embedding $X\to\bP^r$ is not $i$-very ample. Here $\pi_1$ resp.\ $\pi_2$ are the
projections from the incidence variety to $X$ resp.\ $\Hilb^i_{princ}(X)$.}
\item
The inverse image functor preserves exactness on the right part of $\cK_\bullet^i$ (corresponding to the 
complex (\ref{eq2}) which is supported on
the incidence variety), but not on the left part (corresponding to the sequence (\ref{eq1})).
The standard remedy is to
blow up the incidence variety and to split the complex (\ref{eq1}) into short exact sequences before
taking inverse images.
\end{enumerate}
\end{num}

\begin{num}\label{sect29}
We now describe in more detail the short exact sequences of sheaves, whose corresponding
sequences of global sections calculate Koszul cohomology:
\begin{enumerate}
\item[1.] $p<i-1$: In this case, one works on the incidence variety $I_{i,r}$. The key diagram of vector bundles is as follows:
\begin{equation}\label{eq3}
\begin{tikzcd}
&& 0 \ar[d] & 0 \ar[d] \\
0 \ar[r] & \pi_2^*S \ar[r] \ar[d, equals] & \pi_1^*\Omega^1 \ar[r] \ar[d] & R \ar[r] \ar[d] & 0 \\
0 \ar[r] & \pi_2^*S \ar[r] & V\otimes\cO \ar[r] \ar[d] & \pi_2^*Q \ar[r] \ar[d] & 0 \\
&& \pi_1^*\cO(1) \ar[r,equals] \ar[d] & \pi_1^*\cO(1) \ar[d] \\
&& 0 & 0
\end{tikzcd}
\end{equation}
The short exact sequence of sheaves (calculating $K_{p,q}(\cF)$) is determined by tensoring
\begin{equation*}
0\to \wedge^{p+1} R \otimes\pi_1^*\cO(q-1) \to \wedge^{p+1} \pi_2^*Q \otimes\pi_1^*\cO(q-1) 
\to \wedge^p R \otimes\pi_1^*\cO(q)\to 0
\end{equation*}
with $pr_1^*\cF$.

\item[2.] $p=i-1$: This is a special case: $K_{p,q}(X,\cF;L)$ is calculated from
\begin{equation*}
0\to \cI_{I_{i,r}}\to\cO \to \cO_{I_{i,r}}\to 0
\end{equation*}
by tensoring with $pr_1^*\cF(q-1)\otimes pr_2^*\cO_{\Grass}(1)$ and taking global sections. 

Voisin \cite{V1} has shown that the pullback of this sequence to the subscheme of 
curvilinear points of the Hilbert scheme of points calculates Koszul cohomology.

\item[3.] $p>i-1$:
We let 
\begin{equation*}
\phi\colon B=\Bl_{I_{i,r}}(X\times\Grass_i)\to X\times\Grass_i
\end{equation*}
be the blow up, $\phi_1=pr_1\circ\phi$, $\phi_2=pr_2\circ\phi$, $\tDelta$ the preimage of $I_{i,r}$.

The key diagram in this case is as follows:
\begin{equation}\label{eq4}
\begin{tikzcd}
& 0 \ar[d] & 0 \ar[d] & 0 \ar[d] \\
0 \ar[r] & \bar S \ar[r] \ar[d] & \phi_1^*\Omega^1 \ar[r] \ar[d] & \bar Q \ar[r] \ar[d] & 0 \\
0 \ar[r] & \phi_2^*S \ar[r] \ar[d] & V\otimes\cO \ar[r] \ar[d] & \phi_2^*Q \ar[r] \ar[d] & 0 \\
0 \ar[r] & \phi_1^*\cO(1)\otimes\cO(-\tDelta) \ar[r] \ar[d] & \phi_1^*\cO(1) \ar[r] \ar[d] 
& \phi_1^*\cO(1)\otimes\cO_\tDelta \ar[r] \ar[d] & 0 \\
& 0 & 0 & 0
\end{tikzcd}
\end{equation}

The sheaves $\bar S$ and $\bar Q$ are defined as the kernels of the vertical maps. We note that
all sheaves are locally free on $B$ except the one in the bottom right.

The Koszul cohomology group $K_{p,q}(X,\cF;L)$ is calculated from the short exact sequence
\begin{equation*}
0\to\wedge^{p+1-i}\bar S \to \wedge^{p+1-i}\phi_2^*S \to\wedge^{p-i}\bar S\otimes\phi_1^*\cO(1)\otimes\cO(-\tDelta)   \to 0
\end{equation*}
by twisting with $\phi_1^*\cF(q-1)\otimes\phi_2^*\cO_{\Grass}(1)$ and taking global sections.

\end{enumerate}
\end{num}

\begin{num}
Kemeny's proof of Voisin's theorem in \cite{K} explicitly constructs the inverse 
image of diagram (\ref{eq4}) (see the diagram at the beginning of section 1 in his paper) 
without recourse to the map to the Grassmannian.
\end{num}

\section{Geometry of the Lazarsfeld Mukai bundle}

This section summarizes the relevant properties of the Lazarsfeld Mukai bundle and its space of sections.
For the original construction, see \cite{L0}.
There is considerable overlap with \cite{V1} and \cite{K}. 

\begin{num}
Let $(X,L)$ be a polarized K3 surface, $E$ the rank two Lazarsfeld Mukai bundle  \cite{L0} associated to a 
basepointfree $g^1_{k+1}$ on a smooth curve
$C\in\vert L\vert$; there is an exact sequence
\begin{equation*}
0\to E^\vee\to H^0(C,A)\otimes\cO_X\to A\to 0
\end{equation*}
with $A$ a $g^1_{k+1}$ on $C$. We further have
\begin{enumerate}
\item $e=_{\text{def}}h^0(E)=h^0(A)+h^1(A)$, $h^1(E)=h^2(E)=0$;
\item $\det(E)=L$, $c_2(E)=k+1$.
\end{enumerate}

For every section $s$ of $E$ with $0$-dimensional vanishing scheme $\xi(s)$ there is a short exact sequence
\begin{equation*}
0\to \cO_X\to E\to \cI_\xi\otimes L \to 0
\end{equation*}
which shows that $h^1\big(X,\cI_\xi\otimes \cO_X(C)\big)=1$, i.e., the $k+1$
points of $\xi$ are not in general position in the embedding of $X$ by $\vert C\vert$, but span 
a linear space of dimension $k-1$.
\end{num}

\begin{num}
If $E$ is globally generated, we have the following diagram on 
$X\times\bP$ where $\bP=\bP(H^0(E)^\vee)$:
\begin{equation*}
\begin{tikzcd}
&& 0 \ar[d] \\
&& pr_2^*\Omega^1_\bP(1) \ar[d] \ar[dr, dashed] \\
0 \ar[r] & pr_1^*E^\vee \ar[r] \ar[dr, dashed] & H^0(E)^\vee\otimes\cO\ar[r] \ar[d] & pr_1^*F \ar[r] & 0 \\
&& pr_2^*\cO(1) \ar[d] \\
&& 0
\end{tikzcd}
\end{equation*}

The zero scheme $I_E\subset X\times\bP$ of the universal section of $E$ is the subscheme where the composition
\begin{equation*}
pr_1^*E^\vee\to H^0(E)^\vee\otimes\cO\to pr_2^*\cO(1)
\end{equation*}
drops rank, 
hence is resolved by an exact sequence
\begin{equation}\label{eq5}
0\to \wedge^2 E^\vee\boxtimes\cO(-2)\to E^\vee\boxtimes \cO(-1)\to \cO_{X\times\bP}\to \cO_{I_E}\to 0.
\end{equation}

We note that $I_E$ is isomorphic to $\bP(F)$, where
$F=M_E^\vee$ is the Lazarsfeld Mukai bundle corresponding to $\omega_C\otimes A^{-1}$.
\end{num}

\begin{num}\label{num33}
After twisting (\ref{eq5}) by $pr_1^*L=pr_1^*(\wedge^2E)$, the hypercohomology spectral sequence
with respect to $pr_{2,*}$ degenerates into the exact sequence
\begin{multline}\label{eq6}
0\to \cO_\bP(-2)\to H^0(E)\otimes\cO_\bP(-1)\to H^0(L)\otimes\cO_\bP\to \\
pr_{2,*}(\cO_{I_E}\otimes pr_1^*L)\to\cO_\bP(-2)\to 0
\end{multline}
and we can read off:
\begin{enumerate}
\item[(i)] The sheaf $S'=pr_{2,*}(\cI_{I_E}\otimes pr_1^*L)\cong\wedge^{e-2}\Omega_\bP^{1}(1)$ is $1$-regular in the
sense of Castel\-nuovo-Mumford, and $R^1 pr_{2,*}(\cI_{I_E}\otimes pr_1^*L)=\cO(-2)$.
In particular, we note that $\det(S')=\cO_\bP\big(-(e-2)\big)$ (\cite[Lemma 2]{V1}, \cite[Lemma 2.2]{K}).
\item[(ii)] The sheaf $Q'=\Ker\big(pr_{2,*}(\cO_{I_E}\otimes pr_1^*L)\to\cO_\bP(-2)\big)$ is $0$-regular,
hence the last map in (\ref{eq6}) splits, i.e.
\begin{equation*}
pr_{2,*}(\cO_{I_E}\otimes pr_1^*L)\cong Q'\oplus\cO_\bP(-2).
\end{equation*}
In certain cases (see (iv) below), we can identify a canonical splitting.
\end{enumerate}
\end{num}

\begin{num}\label{num34}
If all the sections of $E$ vanish in codimension $2$, there is a morphism
\begin{equation*}
f\colon\bP=\bP(H^0(E)^\vee)\to\Grass_k(V),
\end{equation*}
and we further note:
\begin{enumerate}
\item[(iii)] The universal subbundle on $\Grass_k(V)$ pulls back to $f^*S=S'$, and the 
universal quotient bundle on $\Grass_k(V)$ pulls back to $f^*Q=Q'$.

The restriction $\wedge^i Q\to \wedge^i Q'=\wedge^i Q\otimes\cO_\bP$
induces for each $i$ a map on sections
\begin{equation*}
\Phi_i\colon H^0(\Grass_k(V),\wedge^i Q)=\wedge^i H^0(L)\to H^0(\bP,\wedge^i Q').
\end{equation*}
In  (\ref{thm44}) below we provide criteria to ensure that $\Phi_i$ is bijective.
\item[(iv)]
Applying $pr_{2,*}$ to the sequence (\ref{eq5}), we obtain the exact sequence
{\small
\begin{equation*}
\begin{tikzcd}[column sep=tiny, row sep=tiny]
0\ar[r] & \cO_\bP\ar[r] & pr_{2,*}(\cO_{I_E})\ar[r] & R^2 pr_{2,*}L^\vee(-2)\ar[r] \ar[d, equals] & 
R^2 pr_{2,*} E^\vee(-1)\ar[r] \ar[d, equals] & R^2 pr_{2,*}(\cO_{X\times\bP})\ar[r] \ar[d, equals]& 0 \\
&&& H^0(L)^\vee \otimes\cO(-2) & H^0(E)^\vee \otimes\cO(-1) & \cO_\bP
\end{tikzcd}
\end{equation*}
}
\!from which we can read off (e.g. using relative duality for the projection $X\times\bP\to\bP$) that
\begin{equation*}
pr_{2,*}(\cO_{I_E})\cong pr_{2,*}(\cO_{I_E}\otimes pr_1^*L)^\vee\otimes\cO_\bP(-2)\cong Q'^\vee(-2)\oplus\cO.
\end{equation*}
The trace map $pr_{2,*}\cO_{I_E}\to\cO_\bP$ yields a natural splitting, provided it is non-zero.
The latter condition is equivalent to the characteristic of the base field not dividing $k+1$.
\item[(v)] We can embed the bundle $Q'$ into a diagram
\begin{equation*}
\begin{tikzcd}
&& 0 \ar[d] & 0 \ar[d] \\
&& K\otimes\cO \ar[r, equals] \ar[d] & K\otimes\cO \ar[d] \\
0 \ar[r] & S' \ar[r] \ar[d, equals] & \wedge^2 H^0(E)\otimes\cO \ar[r] \ar[d] & \wedge^2 S'\otimes\cO(2) \ar[r] \ar[d] & 0 \\
0 \ar[r] & S' \ar[r] & H^0(L)\otimes\cO \ar[r] \ar[d] & Q' \ar[r] \ar[d] & 0 \\
&& 0 & 0
\end{tikzcd}
\end{equation*}
$Q'$ is a rank $k$ vector bundle whereas $\bP$ has dimension $h^0(A)+h^1(A)-1=g-k+1$, and this 
requires that the Chern classes $c_{j}\big(\wedge^2 S'\otimes\cO(2)\big)$ vanish for $j\ge k+1$.
Hence this only works for $k\ge g-k$ \cite{T}, i.e., $2k\ge g$. In particular, we recover 
the well-known fact that any globally generated rank $2$ Lazarsfeld Mukai bundle with $2k<g$ must 
have sections vanishing on a curve.

For $r=2k$ we find that $Q'$ is a rank $k$ bundle on $\bP^{k+1}$.
These special bundles have been constructed first by Tango \cite{T}.
\end{enumerate}
\end{num}

\section{Base change}\label{sec4}

In this section we compare the sections of the geometric Koszul complex on the 
incidence variety over the Grassmannian with the sections of its inverse image
on the universal section of the Lazarsfeld Mukai bundle $I_E\subset X\times\bP$. 

We also give a direct proof of Voisin's theorem.

\begin{num}
We focus on the following situation: $X$ is a K3 surface, $L$ a very ample line bundle 
on $X$ such that every effective divisor in the associated linear system is reduced and irreducible.
We let $L\cdot L=4k-2\sigmax-2$ with $\sigmax\in\{0,1\}$. 
Later on we will specialize to
$\sigmax=0$, but for the moment we will track the parity of the sectional genus:
\begin{enumerate}
\item[(i)]
$\vert L\vert$ embeds $X$ into $\bP^r$ with $r=2k-\sigmax$, and $g(C)=2k-\sigmax$
for a general hyperplane section $C$. 
\item[(ii)]
The minimal degree of a pencil on $C$ has degree $k+1$, and we consider
the rank $2$ Lazarsfeld Mukai bundle $E$ corresponding to such a pencil.
Under our assumptions, $E$ is globally generated, and all the sections of $E$ vanish 
in codimension $2$. 
\item[(iii)]
We obtain a morphism $f\colon \bP(H^0(E)^\vee)\to\Grass_k(H^0(L))$; 
we note that $\rk f^*Q =k$, and $\rk f^*S =\dim \bP=k+1-\sigmax$, where
$S$ and $Q$ are the universal sub- and quotient bundle on the Grassmannian.
\end{enumerate}
\end{num}

In section \ref{sec2} we showed that the Koszul complex for $(X,L)$ on $X$ is the direct image of a complex
\begin{equation*}
\dotsc\to\wedge^{i+1} \pi_2^*Q\otimes \pi_1^*L^{\otimes j-1}\to \wedge^{i} \pi_2^*Q\otimes\pi_1^*L^{\otimes j}\to
\wedge^{i-1} \pi_2^*Q\otimes\pi_1^*L^{\otimes j+1}\to\dotsc
\end{equation*}
on the incidence variety in $X\times\Grass_k(H^0(L))$. Here $Q$ is the universal quotient bundle
on the Grassmannian, and $\pi_1$, $\pi_2$ are the projections.

We now apply the inverse image under $f$ and push the complex down to 
$\bP$ in order to obtain
\begin{equation}\label{eq7}
\dotsc\to \wedge^{i+1} Q'\otimes \pi_{2,*}\pi_1^*L^{\otimes j-1}
\to \wedge^{i} Q'\otimes \pi_{2,*}\pi_1^*L^{\otimes j}
\to \wedge^{i-1} Q'\otimes \pi_{2,*}\pi_1^*L^{\otimes j+1} \to\dotsc
\end{equation}
where $Q'$ is the inverse image of $Q$, and the sheaves
$\pi_{2,*}\pi_1^*L^{\otimes j}$ are certain rank $k+1$ vector bundles on $\bP$.

\begin{lemma}\label{lem42}
The direct images of the inverse images of the sections 
of $\pi_2^*\wedge^{i} Q\otimes\pi_1^*L^{\otimes j}$
actually lie in the subsheaves $\wedge^{i} Q'\otimes L_{j}$
of $\wedge^{i} Q'\otimes \pi_{2,*}\pi_1^*L^{\otimes j}$
where
\begin{equation*}
L_j=\Img\Big( H^0(\bP,\pi_{2,*}\pi_1^*L^{\otimes j})\otimes\cO_\bP\to \pi_{2,*}\pi_1^*L^{\otimes j}\Big)=
\begin{cases}
0 & j<0 \\
\cO_\bP & j=0\\
Q' & j=1 \\
\pi_{2,*}\pi_1^*L^{\otimes j} & j\ge 2.
\end{cases}
\end{equation*}
\end{lemma}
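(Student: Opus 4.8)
The statement is really a claim about where a section, pulled back and pushed forward, must live. The plan is to trace a global section $\sigma \in H^0(\bP^r\times\Grass_k(V), pr_1^*L^{\otimes j}\otimes pr_2^*\wedge^i Q)$ through the two operations (inverse image under $f$, then direct image under $pr_2\colon X\times\bP\to\bP$, equivalently $\pi_2\colon I_E\to\bP$) and compare the result with the evaluation of global sections of $\pi_{2,*}\pi_1^*L^{\otimes j}$. First I would record the factorization: on the incidence variety $I_{k,r}$ the relevant sheaf is $\pi_1^*L^{\otimes j}\otimes\pi_2^*\wedge^i Q$, and by Remark \ref{num26} (valid since $i\le k-1$ in the range that matters, or by Theorem \ref{thm25}.2 in general) restriction of sections from $X\times\Grass_k(V)$ to $I_{k,r}$ is bijective; so a section there is the same as a section of $H^0(X,L^{\otimes j})\otimes\wedge^i H^0(L)$. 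Pulling back along $f$ (or rather the induced map $X\times\bP\to X\times\Grass_k(V)$, restricted to $I_E$) lands us with a section of $\pi_1^*L^{\otimes j}\otimes\wedge^i Q'$ on $I_E$, where $Q'=f^*Q$ as in (\ref{num34})(iii).

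The key point is a commutative-square argument. The inverse-image section factors through a map of sheaves that is itself pulled back: the evaluation/restriction $\wedge^i H^0(L)\otimes\cO\to \wedge^i Q$ on $\Grass_k(V)$, composed with the surjection $\pi_2^*\wedge^i Q\to (\text{something involving }\pi_1^*\cO(1))$ from sequence (\ref{eq2}). Concretely, I would write the inverse-image section as the image of a \emph{constant} section (an element of $\wedge^i H^0(L)\otimes H^0(L^{\otimes j})$) under the composite $\wedge^i H^0(L)\otimes\cO_\bP\xrightarrow{\Phi_i\text{-type map}} \wedge^i Q'$, tensored with the multiplication into $\pi_{2,*}\pi_1^*L^{\otimes j}$. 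Pushing forward to $\bP$ and using the projection formula (as in the proof of Theorem \ref{thm25}.2) moves the $\wedge^i Q'$ factor outside: the direct image of the pulled-back section sits inside $\wedge^i Q' \otimes \big(\text{image of } H^0(\bP,\pi_{2,*}\pi_1^*L^{\otimes j})\otimes\cO_\bP\big)$, which is exactly $\wedge^i Q'\otimes L_j$ by definition of $L_j$.

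It then remains to identify $L_j$ explicitly, i.e. to compute the image of $H^0(\bP,\pi_{2,*}\pi_1^*L^{\otimes j})\otimes\cO_\bP \to \pi_{2,*}\pi_1^*L^{\otimes j}$ in the four cases. For $j<0$ the sheaf $\pi_{2,*}\pi_1^*L^{\otimes j}=0$ since $H^0(X,L^{\otimes j})=0$ and the fibres of $\pi_2$ sit inside $X$ via $\pi_1$ (a $\bP^{k-1}$ spanning a secant plane — here I would invoke that $I_E\cong\bP(F)$ from (\ref{num33}), so $\pi_{2,*}\pi_1^*L^{\otimes j}$ is read off from the resolution of $F$). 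For $j=0$ it is $\cO_\bP$ with its constant section. For $j=1$, the relevant computation is precisely (\ref{num33})/(\ref{num34}): $\pi_{2,*}(\cO_{I_E}\otimes pr_1^*L)\cong Q'\oplus\cO_\bP(-2)$, the summand $\cO_\bP(-2)$ has no sections, and $Q'$ is $0$-regular hence globally generated, so the image of global sections is exactly $Q'$. For $j\ge 2$ I would show $\pi_{2,*}\pi_1^*L^{\otimes j}$ is globally generated (again via the resolution of $F$ twisted by $L^{\otimes j}$ and a Castelnuovo–Mumford regularity estimate — $L^{\otimes j}$ is very positive on the curves cut out), so the image is the whole sheaf.

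**Main obstacle.** The genuinely delicate step is the middle one — verifying that the inverse-image section really does factor through the \emph{pullback of the Grassmannian's evaluation map} in the precise way needed, so that the projection formula can be applied fibrewise. This is essentially the assertion that the construction of section \ref{sec2} is natural under the base change $f$, i.e. that $f^*$ of the geometric Koszul complex, restricted to $I_E$, computes things compatibly; making this bookkeeping airtight (rather than "morally clear") is where the real work lies. The regularity/global-generation computations for $\pi_{2,*}\pi_1^*L^{\otimes j}$ when $j\ge 2$ are routine by comparison, given the identification $I_E\cong\bP(F)$ and the known resolution of the dual Lazarsfeld–Mukai bundle.
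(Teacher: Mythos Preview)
Your approach is essentially the paper's: invoke (\ref{num26}) to see that sections on $I_{k,r}$ extend to $X\times\Grass_k(V)$, pull back to $X\times\bP$, and observe that the resulting sections on $I_E$ lie in the image of $H^0(\wedge^i Q')\otimes H^0(L^{\otimes j})$, hence after pushing down to $\bP$ land in $\wedge^i Q'\otimes L_j$. The paper's proof is three sentences and does not even verify the case-by-case identification of $L_j$; it also treats your ``main obstacle'' as immediate (naturality of restriction under pullback), so you are over-worrying there. One small slip: for $j<0$ the sheaf $\pi_{2,*}\pi_1^*L^{\otimes j}$ is \emph{not} zero---the fibres of $\pi_2$ are length-$(k+1)$ subschemes of $X$, so the pushforward has rank $k+1$---but $H^0(\bP,\pi_{2,*}\pi_1^*L^{\otimes j})=H^0(I_E,\pi_1^*L^{\otimes j})=H^0(X,L^{\otimes j})=0$ (using $\pi_{1,*}\cO_{I_E}=\cO_X$, as the fibres of $\pi_1$ are projective spaces), which is what you actually need for $L_j=0$.
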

\begin{proof}
We noted in (\ref{num26}) that the sections of $\pi_2^*\wedge^i Q\otimes\pi_1^*L^{\otimes j}$ 
on the incidence variety $I_{k,r}$ over the Grassmannian are restrictions of sections of 
$pr_2^*\wedge^i Q\otimes pr_1^*L^{\otimes j}$ on $X\times\Grass_k(V)$. 

This implies that the inverse images of these sections on $I_E$ are restrictions of sections
of $pr_2^*\wedge^i Q'\otimes pr_1^*L^{\otimes j}$ on $X\times \bP$.
hence lie in the image of $H^0(\wedge^i Q')\otimes H^0(L^{\otimes j})$.

Pushing down to $\bP$, we obtain sections of $\wedge^i Q'\otimes \pi_{2,*}\pi_1^*L^{\otimes j}$
which must lie in the subsheaf $\wedge^i Q'\otimes L_j$ as required.
\end{proof}

We need to ensure that the sheaves from Lemma \ref{lem42} form an exact 
complex of sheaves. The next result covers the case that is relevant for our proof in Theorem \ref{thm45}.

\begin{proposition}\label{prop43}
If $k+1$ is invertible in the base field, then the complex
\begin{equation*}
0 \to \wedge^k Q'\otimes \cO
\to \wedge^{k-1} Q'\otimes Q'
\to \wedge^{k-2} Q'\otimes L_2 \to\dotsc
\end{equation*}
on $\bP$ is exact.
\end{proposition}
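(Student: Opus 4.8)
The plan is to recognize the complex in question as (a twist of) the relative Koszul complex on the incidence variety $I_E \subset X\times\bP$, pushed down to $\bP$, and to reduce its exactness to two inputs: the exactness of the relative Koszul complex upstairs, and the degeneration of the relevant hyperdirect-image spectral sequence. Concretely, the complex displayed in the proposition is the $j$-indexed strand of (\ref{eq7}) in the range $i+j=k$, with the $L_j$ replaced by their honest values $0,\cO_\bP,Q',\pi_{2,*}\pi_1^*L^{\otimes j}$ from Lemma \ref{lem42}. So the first step is: on $I_E$, write down the relative Koszul complex
\begin{equation*}
0\to \wedge^k \tilde Q \to \dotsc \to \wedge^2\tilde Q\to \tilde Q\to \cO_{I_E}\to 0,\qquad \tilde Q=\pi_1^*L^{-1}\otimes\pi_2^*Q',
\end{equation*}
twist by $\pi_1^*L^{\otimes k}$, so that the $i$-th term becomes $\wedge^i\pi_2^*Q'\otimes\pi_1^*L^{\otimes k-i}$, and note it is exact because $\pi_2\colon I_E=\bP(F)\to\bP$ is a $\bP^{k-1}$-bundle (here $r=2k-\sigmax$ with $\sigmax=0$ for the application, but the bundle structure is what matters) and this is the Euler–Koszul resolution of the structure sheaf along the fibers.

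Next I would apply $\pi_{2,*}$. Since each $\wedge^i\pi_2^*Q'\otimes\pi_1^*L^{\otimes k-i}\cong \wedge^iQ'\otimes\pi_{2,*}\pi_1^*L^{\otimes k-i}$ by the projection formula, the pushed-down complex has exactly the terms $\wedge^iQ'\otimes\pi_{2,*}\pi_1^*L^{\otimes k-i}$; and $\pi_{2,*}\pi_1^*L^{\otimes k-i}$ equals $L_{k-i}$ in the cases $k-i\le 1$ (where it is $0$, $\cO_\bP$, or $Q'$ by (\ref{eq6}) and the computation of $pr_{2,*}(\cO_{I_E}\otimes pr_1^*L)$ in (\ref{num33})), and equals $\pi_{2,*}\pi_1^*L^{\otimes k-i}=L_{k-i}$ for $k-i\ge 2$ by definition. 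So the complex in the proposition is literally $\pi_{2,*}$ of the twisted relative Koszul complex — \emph{provided} no higher direct images interfere. This is the crux: exactness of $\pi_{2,*}$ applied to an exact complex follows from the hyperdirect-image spectral sequence once one knows $R^\ell\pi_{2,*}$ of the terms, and more precisely that the $E_1$-page has only one nonzero row in the relevant range. The vanishing $R^\ell\pi_{2,*}(\wedge^i\pi_2^*Q'\otimes\pi_1^*L^{\otimes m})=\wedge^iQ'\otimes R^\ell\pi_{2,*}\pi_1^*L^{\otimes m}$ reduces, for $m\ge 0$, to $R^\ell\pi_{2,*}\pi_1^*L^{\otimes m}$, which vanishes for $\ell>0$ by Serre vanishing on the $\bP^{k-1}$-bundle (the $L^{\otimes m}$, $m\ge0$, are $\pi_1$-ample, or one reads it directly from the degeneration in (\ref{num33})). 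The only term where a higher direct image appears is the tail $\pi_1^*L^{\otimes 0}=\cO$: there $R^{1}\pi_{2,*}(\cO_{I_E}\otimes pr_1^*L)=\cO_\bP(-2)\ne 0$ by (\ref{num33})(i) — but for the \emph{untwisted} $\cO_{I_E}$, what controls the splicing is whether the trace map $\pi_{2,*}\cO_{I_E}\to\cO_\bP$ is an isomorphism onto the $\cO_\bP$-summand, equivalently whether $pr_{2,*}(\cO_{I_E})\cong Q'^\vee(-2)\oplus\cO$ with the $\cO$ split off by trace, which by (\ref{num34})(iv) holds exactly when the characteristic does not divide $k+1$. That is where the hypothesis ``$k+1$ invertible'' enters, and it is the step I expect to be the main obstacle: one must check that after pushing down, the connecting map from the would-be cokernel at the $\wedge^0Q'\otimes\cO$ spot into $R^1\pi_{2,*}$ of the next term is controlled, i.e. that the spectral sequence degenerates at the bottom of the complex precisely because the trace is an isomorphism.

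Assembling: the plan is (1) identify the proposition's complex with $\pi_{2,*}$ of the twisted relative Koszul complex on $I_E$ (terms matched via projection formula and Lemma \ref{lem42}); (2) invoke exactness of the relative Koszul complex upstairs; (3) run the hyperdirect-image spectral sequence, using Serre vanishing for the higher direct images of the positive twists $\pi_1^*L^{\otimes m}$ $(m\ge1)$ so those terms contribute only in degree $0$; (4) handle the tail term $\cO_{I_E}$ separately, where the hypothesis $(k+1)\in(\text{base field})^\times$ guarantees via (\ref{num34})(iv) that the trace map splits off $\cO_\bP$ and hence that the spectral sequence still degenerates in the relevant range, yielding exactness of the pushed-down complex. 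A clean way to organize (3)–(4) is to break the twisted relative Koszul complex on $I_E$ into short exact sequences $0\to Z_{i+1}\to \wedge^i\pi_2^*Q'\otimes\pi_1^*L^{\otimes k-i}\to Z_i\to 0$ and push each one down, tracking that $R^1\pi_{2,*}$ vanishes on every term except possibly the last syzygy $Z_1$, where the trace hypothesis forces the relevant boundary map to be injective; chasing these long exact sequences from the top of the complex down then gives exactness of the $\pi_{2,*}$-complex at every spot.
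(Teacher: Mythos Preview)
There is a genuine geometric confusion that undermines your argument. You assert that $\pi_2\colon I_E\to\bP$ is a $\bP^{k-1}$-bundle, but it is not: the $\bP^{k-1}$-bundle structure $I_E\cong\bP(F)$ is over $X$ (via $\pi_1$), whereas $\pi_2$ is \emph{finite} of degree $k+1$, its fibre over $[s]\in\bP$ being the length-$(k+1)$ zero scheme $\xi(s)\subset X$. Two things follow. First, all higher direct images $R^\ell\pi_{2,*}$ ($\ell>0$) vanish trivially, so the spectral-sequence analysis and Serre vanishing you invoke are beside the point. Second---and this is fatal for your identification---$\pi_{2,*}\pi_1^*L^{\otimes j}$ is a rank-$(k+1)$ bundle for every $j\ge 0$; in particular $\pi_{2,*}\cO_{I_E}\cong \cO_\bP\oplus Q'^\vee(-2)$ and $\pi_{2,*}\pi_1^*L\cong Q'\oplus\cO_\bP(-2)$ by (\ref{num33}) and (\ref{num34}). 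These are \emph{not} equal to $L_0=\cO_\bP$ and $L_1=Q'$. Hence the complex in the proposition is not ``literally $\pi_{2,*}$ of the twisted relative Koszul complex'': it is a proper subcomplex of it, differing precisely in the two leftmost terms.

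The paper closes exactly this gap. One places the $\pi_{2,*}$ of the Koszul complex (exact, since $\pi_2$ is finite and the complex upstairs is locally split) as the middle row of a vertical short exact sequence of complexes, with the proposition's complex as the top row and the two-term quotient
\[
\wedge^k Q'\otimes Q'^\vee(-2)\ \longrightarrow\ \wedge^{k-1}Q'\otimes\cO(-2)
\]
at the bottom. The hypothesis that $k+1$ is invertible is used, via the trace map, to show this last map is an isomorphism (Lemma~\ref{lem432}); the long exact sequence in homology then gives exactness of the top row. Your instinct that the trace map is where the hypothesis enters was correct, but it enters not through a spectral-sequence differential---there are none---but as the identification of the quotient complex with an isomorphism.
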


The proof builds on the next two lemmas.

\begin{lemma1}\label{lem431}
Let $\pi_2^*Q'\to\pi_2^*\pi_{2,*}\pi_1^*L\to \pi_1^*L$
be the restriction of the evaluation map for $\pi_1^*L$ on $I_E$. 
Its direct image under $\pi_{2,*}$ is the restriction of the trace map for $\pi_{2,*}\pi_1^*L$\textup{,}
which we can embed \textup{(}as the middle vertical map\textup{)} into a commutative diagram
\begin{equation*}
\begin{tikzcd}
0 \ar[r] & Q' \ar[r] \ar[d, dashed] & \pi_{2,*} \pi_2^*Q' \ar[r] \ar[d] & Q'\otimes Q'^\vee(-2) \ar[r] \ar[d, dashed] & 0 \\
0 \ar[r] & Q' \ar[r] & \pi_{2,*}\pi_1^*L \ar[r] & \cO(-2) \ar[r] & 0
\end{tikzcd}
\end{equation*}
Both rows of this diagram are split exact. If $k+1$ is invertible in the base field, then the left vertical map is an isomorphism.
\end{lemma1}
\begin{proof}
The split exact rows arise from (\ref{num33}, ii) and (\ref{num34}, iv) in the previous section, and the dashed
arrows exist because the composition $Q'\to\pi_{2,*} \pi_2^*Q'\to \pi_{2,*}\pi_1^*L\to \cO(-2)$ must be zero.

Now assume that the characteristic of the base field does not divide $k+1$.

To understand the left vertical map, we need to analyze the middle vertical map into which it embeds.
Regarding the latter, the extension morphism for $\pi_{2,*}\pi_1^*L$ and the trace map compose to a sheaf homomorphism
\begin{equation*}
\pi_{2,*}\pi_1^*L \to
\pi_{2,*}\pi_2^*(\pi_{2,*}\pi_1^*L)=\pi_{2,*}(\pi_2^*\pi_{2,*}\pi_1^*L)\to
\pi_{2,*}(\pi_1^*L)
\end{equation*}
which agrees with $(k+1)$ times the identity, hence is an isomorphism.

The restriction to the subsheaf $Q'\subset \pi_{2,*}\pi_1^*L$ 
(from the bottom row of our diagram) is thus also an isomorphism, 
and this subsheaf is mapped under the first map into $\pi_{2,*}\pi_2^*Q'$.

The sections of the image of $Q'$ in $\pi_{2,*}\pi_2^*Q'$ correspond to extensions 
of sections from $Q'$, hence the image agrees with the
subsheaf $Q'\subset \pi_{2,*}\pi_2^*Q'$ from the top row of our diagram.

This means that the left vertical map has a section, thus is an isomorphism.
\end{proof}

\begin{lemma1}\label{lem432}
Let $\det (\pi_2^*Q') \to \wedge^{k-1}\pi_2^*Q' \otimes\pi_1^*L$
be the last nontrivial map in the Koszul complex for $\pi_2^*Q'\to\pi_1^*L$ on $I_E$.
After applying $\pi_{2,*}$\textup{,} we can embed this map into a commutative diagram
\begin{equation*}
\begin{tikzcd}[column sep=small]
0 \ar[r] & \det Q'\otimes \cO\ar[r] \ar[d,dashed] & \det Q'\otimes \pi_{2,*}(\cO_I)\ar[r] \ar[d] 
& \det Q'\otimes Q'^\vee(-2) \ar[r] \ar[d,dashed] & 0 \\
0 \ar[r] & \wedge^{k-1}Q'\otimes Q' \ar[r] & \wedge^{k-1}Q'\otimes \pi_{2,*}(\cO_I\otimes pr_1^*L)\ar[r]
& \wedge^{k-1}Q'\otimes \cO(-2)\ar[r] & 0
\end{tikzcd}
\end{equation*}
Both rows of this diagram are split exact. If $k+1$ is invertible in the base field, then the right vertical map is an isomorphism.
\end{lemma1}
\begin{proof}
Up to a twist by a line bundle on $\bP$, this is the dual statement to the previous 
Lemma \ref{lem431}, hence follows by duality. 
Note that $\wedge^{k-1}Q'\cong Q'^\vee\otimes\det Q'$, because $Q'$ is locally free of rank $k$.
\end{proof}

\begin{proof}[Proof of Proposition \textup{\ref{prop43}}]
Consider the following (vertical) short exact sequence of complexes
\begin{equation*}
\begin{tikzcd}[column sep=tiny]
0 \ar[r] & \wedge^k Q'\otimes \cO
\ar[r] \ar[d] & \wedge^{k-1} Q'\otimes Q'
\ar[r] \ar[d] & \wedge^{k-2} Q'\otimes L_2 \ar[r] \ar[d, equals] & \dotsc\\
0 \ar[r] &  \wedge^k Q'\otimes \pi_{2,*}\pi_1^*\cO_X
\ar[r] \ar[d] &  \wedge^{k-1} Q'\otimes \pi_{2,*}\pi_1^*L
\ar[r] \ar[d] &  \wedge^{k-2} Q'\otimes \pi_{2,*}\pi_1^*L^{\otimes 2} \ar[r] & \dotsc\\
& \wedge^k Q'\otimes Q'^\vee(-2) \ar[r] 
& \wedge^{k-1} Q'\otimes \cO(-2)
\end{tikzcd}
\end{equation*}
The two leftmost columns were displayed as rows in the statement of Lemma \ref{lem432}.
The middle row arises from an exact sequence of vector bundles on the Grassmannian 
via pullback, followed by the 
direct image of a finite map. The inverse image preserves exactness, as the complex 
is locally split; and the direct image also preserves exactness, as a finite map. 
The map in the bottom row is an isomorphism by Lemma \ref{lem432}, hence
the result follows from the long exact sequence of homology of a short exact sequence of complexes.
\end{proof}

Finally we investigate the inverse image maps on global sections
\begin{equation*}
\Phi^i_j\colon \wedge^i H^0(X,L)\otimes H^0(X,L^{\otimes j}) \to H^0(\bP,\wedge^i Q'\otimes L_j)
\end{equation*}

\begin{theorem}[Base Change Theorem]\label{thm44}
Suppose that the characteristic of the base field is $0$ or exceeds $i$.
Consider the composition of the inverse image map for sections and the projection to $\bP$,
\begin{equation*}
\Phi_i^j\colon H^0(I_{k,r},\wedge^i \pi_2^*Q\otimes \pi_1^*L^{\otimes j})
\to H^0(\bP,\wedge^i Q'\otimes L_j).
\end{equation*}
as described in Lemma \textup{\ref{lem42}}.
\begin{enumerate}
\item[\textup{1.}] $\Phi_i^j$ is surjective.
\item[\textup{2.}] If $i\le k-2-\sigmax$\textup{,} or if $j=0$ and $i\le k-\sigmax$\textup{,} then 
$\Phi_i^j$ is bijective.
\end{enumerate}
\end{theorem}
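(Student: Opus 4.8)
The plan is to compute both sides and compare. On the source side, Remark \ref{num26} (with $\cF=L^{\otimes j}$, valid since $i\le k-1$ because $\rk Q=k$ on $\Grass_k(V)$) identifies
\[
H^0(I_{k,r},\wedge^i\pi_2^*Q\otimes\pi_1^*L^{\otimes j})
\;=\;\wedge^i H^0(X,L)\otimes H^0(X,L^{\otimes j}).
\]
On the target side, one needs the cohomology of $\wedge^i Q'\otimes L_j$ on $\bP$. The key input is the Tango resolution
\[
0\to\cO_\bP(-2)\to H^0(E)\otimes\cO_\bP(-1)\to H^0(L)\otimes\cO_\bP\to Q'\to0
\]
from (\ref{num33})–(\ref{num34}), together with its exterior powers $\wedge^i Q'$. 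First I would produce, for each $i$ with $i<\mathrm{char}$ (or char $=0$), a finite locally free resolution of $\wedge^i Q'$ whose terms are direct sums of the three line bundles $\cO_\bP(-2),\cO_\bP(-1),\cO_\bP$ twisted by symmetric/exterior powers of the defining sequence — this is the standard Eagon–Northcott / Buchsbaum–Rim type construction applied to the three-term complex above, and it is exactly where the characteristic hypothesis $\mathrm{char}>i$ enters, since the splitting $\wedge^i(A\oplus B)=\bigoplus\wedge^p A\otimes\wedge^q B$ and the divided-power / symmetric-power identities used to resolve $\wedge^i$ of a cokernel require $i!$ to be invertible. (This is presumably the content of the "resolutions for $\wedge^i Q'$ in Theorem \ref{thm44}" referred to in the introduction.)

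Next I would feed the cases $j=0,1,\ge2$ of $L_j$ (from Lemma \ref{lem42}: $L_0=\cO_\bP$, $L_1=Q'$, $L_j=\pi_{2,*}\pi_1^*L^{\otimes j}$ for $j\ge2$) into these resolutions and chase cohomology. For $j=0$ one directly needs $H^0(\bP,\wedge^i Q')$; tensoring the Tango resolution and its exterior powers with suitable twists and using that $\bP=\bP^{k+1-\sigmax}$ has no intermediate cohomology for line bundles, the long exact sequences collapse and give $H^0(\bP,\wedge^i Q')=\wedge^i H^0(L)$ together with the needed vanishing of lower $H^{<i}$ of the syzygy sheaves, provided $i\le k-\sigmax$ (the range being controlled by the dimension $k+1-\sigmax$ of $\bP$, so that the resolution, of length governed by $i$, stays within the cohomological width available). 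For $j\ge1$ one instead uses that $L_j=\pi_{2,*}\pi_1^*L^{\otimes j}$ fits (via (\ref{num33}.i), (\ref{num34})) into short exact sequences built from $S'=\wedge^{e-2}\Omega^1_\bP(1)$ and $\cO_\bP(-2)$, whose cohomology is again computable by Bott's formula on $\bP$; combining with the resolution of $\wedge^i Q'$ one reads off surjectivity always, and bijectivity once $i\le k-2-\sigmax$ (the extra $-2$ coming from the $\cO_\bP(-2)$ tails in (\ref{eq6}) and in $pr_{2,*}\cO_{I_E}$, which contribute an $H^1$-obstruction exactly in the borderline range $i\in\{k-1-\sigmax,k-\sigmax\}$).

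Surjectivity (part 1) I would isolate first and prove in all cases: the map $\Phi_i^j$ is, by construction in Lemma \ref{lem42}, the composite of the restriction $H^0(X\times\Grass)\to H^0(X\times\bP)$ along $\mathrm{id}_X\times f$ followed by $\pi_{2,*}$, and its image is $H^0(\bP,\wedge^i Q'\otimes L_j)$ by the very definition of $L_j$ as the image of global sections — so surjectivity is essentially formal, once one checks that pushing forward to $\bP$ does not lose sections, which holds because $\pi_1^*L^{\otimes j}$ on $I_E$ has no higher direct images contributing (again Bott / the degeneration in (\ref{num33})). The substantive content, and the main obstacle, is the injectivity in part 2: it amounts to the vanishing of the kernel, i.e.\ to showing that every relation among the restricted sections already comes from a relation upstairs, equivalently that a certain $H^0$ (or the relevant $H^1$ of a syzygy bundle feeding the comparison) vanishes in the stated range. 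Concretely the hard step is the cohomology vanishing $H^1(\bP,(\text{first syzygy of }\wedge^i Q')\otimes L_j)=0$ for $i\le k-2-\sigmax$ (resp.\ $i\le k-\sigmax$ when $j=0$), which I expect to require the explicit Tango-type resolution of $\wedge^i Q'$ rather than any soft argument, and whose failure for larger $i$ is precisely what blocks the extension to odd sectional genus (cf.\ the remarks (\ref{rem46}), (\ref{rem47})). Tracking the dimension count $\dim\bP=k+1-\sigmax$ against the length of these resolutions is what produces the two separate bounds $k-2-\sigmax$ and $k-\sigmax$.
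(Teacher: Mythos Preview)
Your approach is essentially the paper's: the paper builds exactly the linear resolution of $\wedge^i Q'$ derived from the Tango sequence (citing Lebelt \cite{Le} for the construction, which is your ``Eagon--Northcott/Buchsbaum--Rim type'' resolution, with terms $F^{(i)}_l=\bigoplus\cO_\bP(-l)$ and length $i+1$), reads off surjectivity and the injectivity bound $i\le k-\sigma$ from comparing this length against $\dim\bP=k+1-\sigma$, and for $j\ge1$ tensors with the three-term linear resolution of $L_j$ obtained by pushing forward the twist of (\ref{eq5}) by $pr_1^*L^{\otimes j}$, giving a linear resolution of $\wedge^iQ'\otimes L_j$ of length $i+3$ and hence the bound $i\le k-2-\sigma$.

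One correction: surjectivity is \emph{not} formal from the definition of $L_j$ alone. That definition only gives surjectivity of $H^0(L^{\otimes j})\otimes\cO_\bP\to L_j$ at the sheaf level; it says nothing about surjectivity of $\wedge^i H^0(L)\otimes H^0(L^{\otimes j})\to H^0(\bP,\wedge^i Q'\otimes L_j)$ on global sections, which still requires the $0$-regularity of $\wedge^i Q'\otimes L_j$ (equivalently, the linear resolution you already plan to build). So surjectivity and injectivity are extracted from the same resolution computation, not from separate arguments as your write-up suggests.
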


The bijectivity of $\Phi_k^0$ in the even genus case ($\sigmax=0$) is one of the key results
in Voisin's proof \cite[3.18]{V1}.
For odd sectional genus, there is a short exact sequence
\begin{equation*}
0\to S^{k-1}H^0(E)\to\wedge^k H^0(L)\to H^0(\bP,\wedge^k Q')\to 0
\end{equation*}
and $\wedge^k Q'=\cO_\bP(k-1)$.

\begin{proof}
First consider $j=0$, $L_0=\cO$: In characteristic $0$,
the exterior powers of $Q'$ can be resolved \cite[3.1]{Le} as\footnote{Lazarsfeld 
pointed out that these resolutions extend to any field where $i!$ is invertible \cite{Te}.}
\begin{equation}\label{eq7a}
0\to F_{i+1}^{(i)}\to \dotsc \to F_1^{(i)}\to F_0^{(i)} \to \wedge^i Q'\to 0
\end{equation}
 derived from (\ref{eq6}) where
\begin{multline*}
F_l^{(i)}=\Big(\wedge^{i-l} H^0(L)\otimes S^l\big(H^0(E)\otimes\cO(-1)\big)\Big)\\
\oplus
\Big(\wedge^{i-l+1} H^0(L)\otimes S^{l-2}\big(H^0(E)\otimes\cO(-1)\big)\otimes\cO(-2)\Big).
\end{multline*}
Here the first term is understood to appear only for $l\le i$, and the second term only for $l\ge 2$.

We note that $F^{(i)}_0=\wedge^i H^0(L)\otimes\cO$, and more generally that 
$F^{(i)}_l$ is a direct sum of copies of $\cO(-l)$, hence the resolution is linear.

Surjectivity of $H^0(\bP,F_0^{(i)})=\wedge^i H^0(L)\to H^0(\bP,\wedge^i Q')$ is immediate \cite[B.1.3]{L}, 
and injectivity follows as long as the last term $F_{i+1}^{(i)}=\oplus\,\cO(-i-1)$ satisfies
$i+1\le\dim\bP=k+1-\sigma$.

Here is an alternative argument based on Castelnuovo-Mumford regularity: As long as the 
characteristic is $0$ or exceeds $i$, the $0$-regularity of $\wedge^i Q'$
follows from \cite[1.8.10]{L}. Hence the map $\wedge^i H^0(Q')=\wedge^i H^0(L)\to H^0(\wedge^i Q')$ is surjective.
To show injectivity, it suffices to verify that both vector spaces have the same dimension. 
As $\wedge^i Q'$ is $0$-regular, the dimension of $H^0(\wedge^i Q')$ agrees with 
the Euler characteristic $\chi(\wedge^iQ')$, 
and the latter can be determined by Riemann-Roch via a calculation with Chern classes.

Now consider $j\ge 1$: $L_1=Q'$ is $0$-regular and has a linear resolution with three terms. The same holds
for $L_j$ ($j\ge 2$), after twisting (\ref{eq5}) by $pr_1^*L^{\otimes j}$ and projecting to $\bP$:
\begin{multline*}
0\to H^0(L^{\otimes(j-1)})\otimes\cO_\bP(-2)\to H^0(E\otimes L^{\otimes(j-1)})\otimes\cO_\bP(-1) \\
\to H^0(L^{\otimes j})\otimes\cO_\bP\to \pi_{2,*}\pi_1^*L^{\otimes j}\to 0.
\end{multline*}

The tensor product of the resolution (\ref{eq7a}) for $\wedge^i Q'$ 
(with last term $\oplus\,\cO(-i-1)$) and the resolution for $L_j$ (for $j\ge 1$) provide a linear 
resolution for $\wedge^i Q'\otimes L_j$ with ending term $\oplus\,\cO(-i-3)$.
This resolution implies that the multiplication map
\begin{equation*}
H^0(\bP,L_j)\otimes H^0(\bP,\wedge^i Q') \to H^0(\bP,L_j\otimes\wedge^i Q')
\end{equation*}
is an isomorphism for $i\le k-2-\sigmax$, and can be used to identify its kernel in the other cases.
\end{proof}

\begin{theorem}\label{thm45}
Let $(X,L)$ be a \emph{K3} surface $X$ with a very ample line bundle $L$ such that $L\cdot L=4k-2$. 
Assume that every effective divisor in the associated linear system $\vert L\vert$ is reduced and irreducible.
Then we have $K_{k-2,2}(X,L)=0$.
\end{theorem}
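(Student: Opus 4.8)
The plan is to identify $K_{k-2,2}(X,L)$ with the first homology of a short complex of sheaves on $\bP = \bP(H^0(E)^\vee)$, and then to show that this homology vanishes by a direct cohomological computation. Since $L\cdot L = 4k-2$ we are in the case $\sigmax = 0$, so $r = 2k$, $\dim\bP = k+1$, and $Q'$ is a rank-$k$ bundle on $\bP^{k+1}$ (the Tango bundle). Throughout I would assume the characteristic is $0$ or exceeds $k$, so that Proposition~\ref{prop43} and Theorem~\ref{thm44} apply (the statement as given presumably carries this hypothesis implicitly, or it is harmless since Voisin's theorem can fail in small positive characteristic).

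First, I would write down the relevant strand of the geometric Koszul complex: by Theorem~\ref{thm25}, $K_{k-2,2}(X,L)$ is computed as the middle homology of the three-term complex of global sections
\begin{equation*}
H^0\big(\wedge^{k-1}\pi_2^*Q\otimes\pi_1^*\cO\big) \to H^0\big(\wedge^{k-2}\pi_2^*Q\otimes\pi_1^*L\big) \to H^0\big(\wedge^{k-3}\pi_2^*Q\otimes\pi_1^*L^{\otimes 2}\big)
\end{equation*}
on the incidence variety $I_{k,r}$ — more precisely, one uses the short exact sequence packaging from (\ref{sect29}.1), but the upshot is that $K_{k-2,2}$ is a subquotient of $H^0(\wedge^{k-2}\pi_2^*Q\otimes\pi_1^*L)$ determined by two adjacent maps. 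Next I would apply the inverse image under $f\colon\bP\to\Grass_k(V)$ and push forward to $\bP$. By Lemma~\ref{lem42} the transformed sections land inside the subsheaves $\wedge^iQ'\otimes L_j$, and by Theorem~\ref{thm44}(2) the maps $\Phi_i^j$ on sections are \emph{bijective} in exactly the range we need: for $j=0$ we need $i=k-1\le k-\sigmax = k$ (fine), and for $j=1,2$ we need $i=k-2$ and $i=k-3$, both $\le k-2-\sigmax = k-2$ (fine). Hence the complex of global sections computing $K_{k-2,2}(X,L)$ is identified, via the $\Phi$'s, with the complex of global sections of the subcomplex
\begin{equation*}
\wedge^{k-1}Q'\otimes\cO \to \wedge^{k-2}Q'\otimes Q' \to \wedge^{k-3}Q'\otimes L_2
\end{equation*}
on $\bP$. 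This is precisely (a truncation of) the complex appearing in Proposition~\ref{prop43}, which is \emph{exact as a complex of sheaves} when $k+1$ is invertible. So $K_{k-2,2}(X,L) = H^1$ of the complex of global sections, and by the hypercohomology spectral sequence for this short exact sequence of sheaves, the obstruction to $H^1$ of sections vanishing is $H^1(\bP,\wedge^{k-1}Q'\otimes\cO)$.

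The final step is therefore to show $H^1(\bP,\wedge^{k-1}Q') = 0$. Here I would invoke the $0$-regularity of $\wedge^i Q'$ established in the proof of Theorem~\ref{thm44} (valid since the characteristic is $0$ or exceeds $k-1$): a $0$-regular sheaf on projective space satisfies $H^q(\wedge^i Q'(-q)) = 0$ for $q>0$, in particular $H^1(\wedge^{k-1}Q') = 0$. Combined with the exactness of the sheaf complex, chasing the hypercohomology long exact sequence gives that the sequence of global sections
\begin{equation*}
H^0(\wedge^{k-1}Q'\otimes\cO) \to H^0(\wedge^{k-2}Q'\otimes Q') \to H^0(\wedge^{k-3}Q'\otimes L_2)
\end{equation*}
is exact in the middle, i.e. $K_{k-2,2}(X,L) = 0$. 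I expect the main obstacle to be bookkeeping rather than conceptual: one must carefully match the indexing of the short exact sequences in (\ref{sect29}.1) with the terms $\wedge^iQ'\otimes L_j$, verify that the inverse-image map is compatible with the differentials (so that the identification of complexes, not merely of terms, is legitimate), and confirm that all the regularity and bijectivity hypotheses land strictly inside the permitted ranges — the case $j=0$, $i=k-1$ sits at the boundary $i\le k-\sigmax$ and relies on $\dim\bP = k+1 > i = k-1$ for the injectivity argument via the resolution (\ref{eq7a}), so this edge case deserves explicit attention.
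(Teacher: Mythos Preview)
There is a genuine indexing error that breaks the argument. The Koszul group $K_{p,q}(X;L)$ is the homology at the term $\wedge^p H^0(L)\otimes H^0(L^{\otimes q})$, so $K_{k-2,2}$ sits at $\wedge^{k-2}H^0(L)\otimes H^0(L^{\otimes 2})$, flanked by $\wedge^{k-1}H^0(L)\otimes H^0(L)$ and $\wedge^{k-3}H^0(L)\otimes H^0(L^{\otimes 3})$. Your three-term complex $\wedge^{k-1}V\otimes H^0(\cO_X)\to\wedge^{k-2}V\otimes H^0(L)\to\wedge^{k-3}V\otimes H^0(L^{\otimes 2})$ instead computes $K_{k-2,1}$ in the middle. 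Correspondingly, the pushed-down complex $\wedge^{k-1}Q'\otimes\cO\to\wedge^{k-2}Q'\otimes Q'\to\wedge^{k-3}Q'\otimes L_2$ is \emph{not} a truncation of Proposition~\ref{prop43}: that proposition concerns the strand $\wedge^{k-j}Q'\otimes L_j$, whereas yours is the shifted strand $\wedge^{k-1-j}Q'\otimes L_j$, and the exactness argument there (via Lemma~\ref{lem432} on $\det Q'$) does not apply to it.

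If you correct the indices, the relevant transformed three-term complex becomes $\wedge^{k-1}Q'\otimes Q'\to\wedge^{k-2}Q'\otimes L_2\to\wedge^{k-3}Q'\otimes L_3$, which \emph{is} part of Proposition~\ref{prop43}. But now the left base-change map $\Phi_{k-1}^1$ falls outside the bijectivity range of Theorem~\ref{thm44}(2) (you need $i\le k-2$ for $j\ge 1$, and $i=k-1$ misses this); it is only surjective, with kernel $S^{k-2}H^0(E)$. So you cannot simply identify the two three-term complexes. The paper handles this by extending the diagram one step further to the left, to $\wedge^k H^0(L)\otimes H^0(\cO_X)$, where $\Phi_k^0$ \emph{is} bijective; the kernel of the vertical maps is then a complex concentrated in a single degree, and the long exact homology sequence of the resulting short exact sequence of complexes shows that this kernel contributes to $K_{k-1,1}$ but not to $K_{k-2,2}$. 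The vanishing input on the bottom row is $H^1(\bP,\wedge^k Q')=H^1(\bP,\cO_\bP(k))=0$, rather than $H^1(\wedge^{k-1}Q')$. (As a minor aside, note that $0$-regularity literally gives $H^1(\cF(-1))=0$; you would need the standard consequence that $0$-regular implies $1$-regular to get $H^1(\cF)=0$.)
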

\begin{proof}
Consider the following commutative diagram with vertical base change maps
\begin{equation*}
\begin{tikzcd}[column sep=small]
0 \ar[r] & \wedge^k H^0(L) \ar[r] \ar[d, "\cong"] &
\wedge^{k-1} H^0(L)\otimes H^0(L) \ar[r] \ar[d, two heads] &
\wedge^{k-2} H^0(L)\otimes H^0(L^{\otimes 2}) \ar[r] \ar[d, "\cong"] & \dotsc\\
0 \ar[r] & H^0(\bP,\wedge^k Q')
\ar[r] & H^0(\bP,\wedge^{k-1} Q'\otimes Q')
\ar[r] & H^0(\bP,\wedge^{k-2} Q'\otimes L_2) \ar[r] & \dotsc
\end{tikzcd}
\end{equation*}
The top row calculates Koszul cohomology, and the bottom row arises from
transferring the geometric Koszul complex to $\bP=\bP(H^0(E)^\vee)$ and taking global sections.

The vertical arrows are surjective by Theorem \ref{thm44}, and we consider the homology of the rows:
\begin{enumerate}
\item[(i)] The homology groups of the top row are $K_{k,0}(X;L)=0$, $K_{k-1,1}(X;L)$ and $K_{k-2,2}(X;L)$.
\item[(ii)] The homology groups of the bottom row vanish: 
The left-exactness of the global section functor implies exactness at the first two terms; exactness
at the third term is ensured by the vanishing of $H^1(\bP,\wedge^k Q')=H^1(\bP,\cO(k))$.
\item[(iii)] The vertical maps on the right and on the left are injective by Theorem \ref{thm44}, and the kernel of the 
central map is isomorphic to $S^{k-2}H^0(E)$.
\end{enumerate}

Viewing the diagram as the two lower rows of a vertical short exact sequence of complexes,
we conclude from the corresponding long exact sequence of homology that 
$K_{k-1,1}(X;L)=S^{k-2}H^0(E)$ and $K_{k-2,2}(X;L)=0$ as desired.
\end{proof}

\begin{remark}\label{rem46}
Let us briefly mention what happens in the case of odd sectional genus:

We still obtain the diagram from the previous proof, and (i) and (ii) continue to hold. The kernel complex is
\begin{equation}\label{eq8}
0\to S^{k-1}H^0(E)\to M \to S^{k-3} H^0(E)\otimes H^0(L)\to 0
\end{equation}
where $M$ is resolved as
\begin{multline*}
0\to S^{k-2}H^0(E)\otimes H^0(E)\to \\
\big(H^0(L)\otimes S^{k-3}H^0(E)\big)\oplus S^{k-1}H^0(E)\oplus \big(S^{k-2}H^0(E)\otimes H^0(E)\big)\to M \to 0
\end{multline*}

$K_{k-2,1}(X;L)$ is isomorphic to the cokernel of the map $M \to S^{k-3} H^0(E)\otimes H^0(L)$,
but it is not a priori clear why this map should be surjective.
\end{remark}

\begin{remark}\label{rem47}
For K3 surfaces of odd sectional genus, the space of sections of a Lazarsfeld Mukai bundle is too small
to capture all the information of the Koszul complex, hence we should consider a larger parameter space. 

Actually, there is a canonical choice for such a space: 
Over K3 surfaces of even sectional genus, $\bP(H^0(E)^\vee)$ can be specified independently of the
vector bundle $E$, as the subscheme of the Hilbert scheme $\Hilb^{k+1}(X)$
of points that are not in general position in the embedding by $\vert L\vert$. 
For even sectional genus, this space corresponds to sections of a unique
Lazarsfeld Mukai bundle $E$.

The corresponding subscheme for K3 surfaces of odd sectional genus can be expressed as a projective fibration
over another K3 surface. The fibers correspond to the spaces of sections of a suitable Lazarsfeld Mukai bundle
on $X$, and the base K3 surface parametrizes bundles with these numerical invariants and
is closely related to $X$. Our analysis in (\ref{rem46}) only captured the information in one of those fibres.

We do not know if our setup can be adapted:
\begin{enumerate}
\item[(i)] A universal bundle may not exist over the mirror surface
(but see the discussion in \cite[section 10.2.2]{H}).
\item[(ii)] One might expect, that the mirror surface is isomorphic to the original K3 surface $X$, but 
this is not at all obvious from general principles \cite[7.35]{BBR}.
\end{enumerate}

Another approach could be to work with an infinitesimal deformation of $E$, i.e., with a
non-reduced structure on $\bP(H^0(E)^\vee)$. 
\end{remark}

\begin{remark}[An alternative approach]
Voisin's approach identifies part of the Koszul complex on $X\times \Hilb^{k+1}(X)$ (more exactly,
the inverse image from (\ref{sect29}.2) indicated earlier) 
and takes a further inverse image via a certain rational map
\begin{equation*}
X\times\bP(H^0(E)^\vee)\to \Hilb^{k+1}(X).
\end{equation*}
This rational map fits into the following larger diagram
\begin{equation*}
\begin{tikzcd}
X\times\bP(H^0(E)^\vee)\ar[r, dashed]  \ar[dr, "span"', dashed]
& \Hilb^{k+1}(X) \ar[d, "can", dashed] \\
& \Grass_{k+1}(V)
\end{tikzcd}
\end{equation*}
where all the maps to the Grassmannian are the natural ones.

The diagonal arrow becomes a morphism after blowing up the incidence variety $I_E$, and 
it should be possible to prove the vanishing of $K_{k,1}(X;L)$ based on an analysis of the resulting inverse image.
\end{remark}

\bigskip
\textit{E-mail}: {\tt jk.rathmann@gmail.com}
\end{document}